\newtheorem{theorem}{Theorem}
\newtheorem{proposition}{Proposition}
\newtheorem{definition}{Definition}
\theoremstyle{remark}
\newtheorem{remark}{Remark}
\newcommand{\tr}{\operatorname{tr}}
\begin{document}

\title{Introduction to $PSL_2$ phase tropicalization}
\author{Mikhail Shkolnikov$^*$, Peter Petrov \vspace{-16pt}}
\thanks{$^*$Supported by the Simons Foundation International grant no. 992227, IMI-BAS\\Keywords: polar decomposition, hyperbolic amoeba, non-Archimedean tropicalization, circle bundle, phase tropicalization. MSC classes: 14T10, 14T20, 14L35, 14T90, 14H10}
\maketitle

\begin{abstract}
The usual approach to tropical geometry is via degeneration of amoebas of algebraic subvarieties of an algebraic torus $(\mathbb{C}^*)^n$.
An amoeba is logarithmic projection of the variety forgetting the angular part of coordinates, called the phase.
Similar degeneration can be performed without
ignoring the phase. The limit then is called phase tropical variety, and it is a powerful tool in numerous areas. In the
article is described a non-commutative version of phase tropicalization in
the simplest case of the matrix group $PSL_2(\mathbb{C})$, replacing here $(\mathbb{C}^*)^n$ in the classical approach.
\end{abstract}

\section{Introduction}

Tropicalization is defined via amoebas of algebraic varieties. The amoeba of a variety $V \subset (\mathbb{C}^*)^n$ was introduced by
Gelfand, Kapranov, and Zelevinsky in \cite{GKZ}. The map $\operatorname{Log}: (\mathbb{C}^*)^n \rightarrow \mathbb{R}^n, z \mapsto (\log|z_1|,...,\log|z_n|)$, defines the amoeba of $V$ as $\operatorname{Log}(V) \subset \mathbb{R}^n,$ forgetting the angular part called the phase.

G.Mikhalkin and the first author started an investigation of possible generalization of tropicalization, in which the algebraic torus is replaced by a non-commutative complex algebraic group (\cite{MS}). The analog of the map $\operatorname{Log}$ in that case takes values in the
quotient of the group by its maximal compact subgroup, which in the case of $(\mathbb{C}^*)^n$ is $(S^1)^n$. The main example is $PSL_2(\mathbb{C})$, the group of two-by-two complex matrices with unit determinant modulo multiplication by $-1$. This group is isomorphic to the group of orientation-preserving isometries of hyperbolic space $\mathbb{H}^3$. The analog of $\operatorname{Log}$ now is the (orbit) map $\varkappa: PSL_2(\mathbb{C}) \rightarrow \mathbb{H}^3,
\space \varkappa(A) = A(O),$ for a base point $O\in\mathbb{H}^3$, and $A(O)$ denotes
the isometry action defined by $A$. The map $\varkappa$ is seen as taking the quotient of $PSL_2({\mathbb C)}$ by the stabilizer of $O,$ which is the maximal compact subgroup of rotations around $O$.

The hyperbolic amoeba of a subvariety $V\subset PSL_2(\mathbb{C})$ is then defined to be $\varkappa(V)\subset\mathbb{H}^3.$ In \cite{MS}, tropical limits of hyperbolic amoebas of curves were described in terms of certain spherical floor diagrams. However the case of surfaces remains until now unresolved. We announce the following result, with complete proof included in the next extended version of this article.

\textbf{Claim 1.} The tropical limit of hyperbolic amoebas of a family of surfaces in $PSL_2(\mathbb{C})$ is complement to an open ball centered in $O$.

This result shows in particular that one cannot extract much geometric or topological information from this kind of degeneration. One way to enhance it is to use $PSL_2$ phase tropicalization, which does not forget the phase.

 We give a complete characterization for such tropicalizations of constant families (see Theorem \ref{thm_constant}) and consider two examples of non-constant families. In doing this, we use a new type of phase valuation map $\operatorname{VAL},$ arising as a phase tropicalization of a point (see Theorem \ref{thm_VALlim}). Preparations for its explicit description (see Theorem \ref{thm_VALformula}) is the key part of the article. Finally some directions for future development are mentioned, that will become part of the next article, now under preparation.

\section{Introducing $PSL_2(\mathbb{C})$ phase tropicalization}



We choose the following model for the hyperbolic 3-space $\mathbb{H}^3$ (see \cite{Th}). Let $A$ be a two-by-two matrix with complex entries. We denote by $A^*$ its transposed complex conjugate (Hermitian conjugate). The set of all
Hermitian two-by-two matrices ($A = A^*$) is a vector space of real dimension $4$ on which the determinant function defines a quadratic form of signature $(1,3)$. The locus of matrices with determinant $1$ is then a hyperboloid of two sheets, corresponding to positive- and negative-definite matrices. Take $\mathbb{H}^3$ to be the sheet of positive-definite matrices. Note that minus the determinant defines a Riemannian metric of constant negative curvature on it. There is transitive action of $PSL_2(\mathbb{C})$ via isometries on $\mathbb{H}^3$, defined by $A(P) = APA^* \in \mathbb{H}^3$.  Let $O = \begin{pmatrix}
1 & 0 \\
0 & 1
\end{pmatrix}$. The distance from a point $P\in\mathbb{H}^3$ to $O$ is computed as the absolute value of the logarithm of an eigenvalue of $P.$ In particular, taking a power $P^h$ gives a homothety for $h \in \mathbb{R}_{\geq 0}$

\begin{definition} The \emph{hyperbolic amoeba map} is $\varkappa: PSL_2(\mathbb{C}) \rightarrow \mathbb{H}^3, \varkappa(A) := AOA^* = AA^*$. 
\end{definition}
It is a smooth fibration with fibers diffeomorphic to $\mathbb{R}P^3,$ which are cosets of the group $PSU(2)$ acting on $\mathbb{H}^3$ by rotations around $O.$



 A (right) polar decomposition of square matrix $A$ represents it as $A = PU$ where $P$ is a positive semi-definite Hermitian matrix, and $U$ is a unitary matrix. It always exists and is unique for a non-degenerate $A$, with $P$ being the square root of $AA^*,$
and $U = P^{-1}A.$

We have a one-parameter family $R_h$ of diffeomorphisms of $PSL_2(\mathbb{C}),$ lifting the homotheties in $\mathbb{H}^3.$ This diffeomorphisms are defined by $R_h(PU)=P^hU,\ h \in \mathbb{R}_{\geq 0}$, with $P$ a Hermitian positive-definite matrix and $U\in PSU(2)$ unitary matrix considered up to sign. A natural compactification of $PSL_2(\mathbb{C})$ is $\mathbb{C}P^3=\overline{PSL_2(\mathbb{C})}$. Indeed, the identification $\mathbb{C}^4\cong Mat_{2\times 2}(\mathbb{C})$ together with the quotient $\mathbb{C}^4\backslash\{0\}\to \mathbb{C}P^3$, provides an open embedding of $PSL_2(\mathbb{C})=PGL_2(\mathbb{C})$ in $\mathbb{C}P^3$ as the complement of a smooth quadric $Q$. The latter is the projectivization of the set of (non-zero) $2\times 2$ matrices of zero determinant. Then $R_h$ continuously extends to $Q$ as $id_Q$ because taking powers of rank one two-by-two matrix results in proportional matrix.

Let $\mathbb{K}$ be the field of Hahn series, i.e. ``real-power Puiseux series'' (see \cite{M05}) with complex coefficients in variable $t\rightarrow\infty$, and consider $\overline{PSL_2(\mathbb{K})}=\mathbb{K}P^3$. For a non-zero $2\times 2$ matrix $A$ over $\mathbb{K},$ let $[A]_{\mathbb{K}^*}\in\overline{PSL_2(\mathbb{K})}$ denote  the class of all matrices $\mu A$ for  $\mu\in\mathbb{K}^*.$ Assume that all entries in $A$ are convergent power series in $t$. Then $A(t) \neq 0$ for $t$ big enough because for some $\alpha \in \mathbb{R}$, $t^{-\alpha}A$ converges to a non-zero matrix, so it makes sense to consider $[A(t)]_{\mathbb{C}^*}\in\overline{PSL_2(\mathbb{C})}$.

Under the convergence assumption above, it is not difficult to prove the following theorem whose poof will be included in the forthcoming paper. 

\begin{theorem}\label{thm_VALlim}
The limit
$$\lim_{t\rightarrow\infty}R_{(\log(t))^{-1}}[A(t)]_{\mathbb{C}^*}\in\overline{PSL_2(\mathbb{C})}$$
\label{thm_limitexists} exists. We denote it by $\operatorname{VAL}([A]_{\mathbb{K}^*})$ and call it the matrix valuation.
\end{theorem}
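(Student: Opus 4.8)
The plan is to route everything through the singular value decomposition and to exploit that, over the field of convergent Hahn series, all of the decomposition data are again convergent Hahn series in $t$; the rescaling $R_{(\log t)^{-1}}$ then acts on their valuations in a transparent way. Write the polar decomposition $A = PU$, so $P = \sqrt{AA^*}$, and fix a singular value decomposition $A = \sigma_1 u_1 v_1^* + \sigma_2 u_2 v_2^*$ with $\sigma_1 \ge \sigma_2 \ge 0$ and orthonormal frames $\{u_i\}$, $\{v_i\}$. Set $P_i = u_i v_i^*$. Using $R_h(PU) = P^h U = P^{h-1}A$ together with $u_i u_i^* A = \sigma_i P_i$, one obtains the clean identity
\[
R_h\big([A]_{\mathbb C^*}\big) = \big[\sigma_1^h P_1 + \sigma_2^h P_2\big]_{\mathbb C^*} = \big[P_1 + \rho^h P_2\big]_{\mathbb C^*}, \qquad \rho := \sigma_2/\sigma_1 \in (0,1].
\]
Everything thus reduces to the limits of $\rho^{(\log t)^{-1}}$ and of the rank-one components $P_1, P_2$.

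First I would dispose of the degenerate cases. If $\det A \equiv 0$, then $A$ has rank one, $[A(t)]_{\mathbb C^*}$ lies on the quadric $Q$ for every $t$, and since $R_h$ restricts to the identity on $Q$ the claim reduces to plain convergence of $[A(t)]_{\mathbb C^*}$. If $A^*A$ is scalar (equivalently $\sigma_1 \equiv \sigma_2$), then $\rho \equiv 1$ and again $R_h([A]) = [A]$. In both cases convergence follows by factoring out a power of $t$, as in the discussion preceding the theorem. So I may assume $\det A \not\equiv 0$ and that the discriminant $(\tr A^*A)^2 - 4\det A^*A$ of the characteristic polynomial of $A^*A$ is not identically zero.

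The heart of the argument is then the following. The eigenvalues $\lambda_1 \ge \lambda_2 > 0$ of $A^*A$ are $\tfrac12\big(\tr A^*A \pm \sqrt{\operatorname{disc}}\,\big)$; since convergent Hahn series are closed under division and under square roots of elements with positive leading coefficient, the $\lambda_i$, the singular values $\sigma_i = \sqrt{\lambda_i}$, the eigenprojections $Q_i = (A^*A - \lambda_j I)/(\lambda_i - \lambda_j)$ (for $\{i,j\}=\{1,2\}$), and finally $P_i = A Q_i/\sigma_i$, are all convergent Hahn series in $t$. Crucially this persists even when $\lambda_1, \lambda_2$ collide to leading order: as long as the discriminant is not identically zero the eigenvalues remain distinct \emph{as Hahn series}, so the $P_i$ stay well defined. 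Now $\|P_i\|_F = 1$, so each $P_i$ has bounded entries, and a bounded convergent Hahn series has valuation $\le 0$ and therefore converges as $t \to \infty$ (to its constant term, or to $0$). Hence $P_i(t) \to P_i^\infty$.

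It remains to combine the pieces. With $v(\rho) = -\beta$, where $\beta = v(\sigma_1) - v(\sigma_2) \ge 0$, we have $\rho(t) \sim c\,t^{-\beta}$ with $c > 0$, so $\rho^{(\log t)^{-1}} = \exp\!\big(\log\rho/\log t\big) \to e^{-\beta}$, and therefore
\[
R_{(\log t)^{-1}}\big([A(t)]_{\mathbb C^*}\big) = \big[P_1(t) + \rho(t)^{(\log t)^{-1}} P_2(t)\big]_{\mathbb C^*} \longrightarrow \big[P_1^\infty + e^{-\beta} P_2^\infty\big]_{\mathbb C^*}.
\]
The right-hand side is a genuine point of $\mathbb C P^3$: since $Q_1 Q_2 = 0$ one has $\langle P_1, P_2\rangle_F = \tr(P_2^* P_1) = 0$ for all $t$, whence by continuity $\|P_1^\infty + e^{-\beta}P_2^\infty\|_F^2 = 1 + e^{-2\beta} > 0$. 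I expect the main obstacle to be exactly the control of the eigen-data across the degeneration where the two singular values become comparable to leading order: this is what forces one to work with the full Hahn expansion rather than with leading terms alone, and to use the a priori bound $\|P_i\|_F = 1$ to upgrade ``is a Hahn series'' into ``converges''. The remaining inputs — closure of convergent Hahn series under the relevant operations, and convergence of bounded ones — are routine.
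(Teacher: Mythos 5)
The paper does not actually contain a proof of this theorem: it is explicitly deferred ("whose proof will be included in the forthcoming paper"), and the only hints given are the remark that the limit depends only on the top-order term $Bt^\alpha$ and the explicit formula of Theorem \ref{thm_VALformula}. So there is nothing to compare your argument against; I can only assess it on its own terms.

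On its own terms it is essentially correct, and it is the natural argument. The reduction $R_h([A]_{\mathbb{C}^*})=[P_1+\rho^h P_2]_{\mathbb{C}^*}$ with $P_i=u_iv_i^*$ and $\rho=\sigma_2/\sigma_1$ is right, the two degenerate cases ($\det A\equiv 0$, and $A^*A$ scalar) are correctly disposed of, the computation $\rho^{1/\log t}\to e^{-\beta}$ is correct, the use of $P_i=AQ_i/\sigma_i$ to sidestep the phase ambiguity of singular vectors is a good move, and the orthogonality $\tr(P_2^*P_1)=0$ correctly guarantees that the limiting matrix is nonzero, so the limit is a genuine point of $\mathbb{C}P^3$. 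A quick sanity check against Theorem \ref{thm_VALformula} (e.g.\ $A=\operatorname{diag}(t^\alpha,t^{-\alpha})$ gives $\beta=2\alpha$ and limit $[\operatorname{diag}(e^\alpha,e^{-\alpha})]_{\mathbb{C}^*}$) confirms your answer agrees with the paper's stated formula. The one place where you assert rather than prove is the crux: that the spectral data $\lambda_i$, $\sigma_i$, $Q_i$, $P_i$ are again \emph{convergent} Hahn series, which rests on closure of convergent Hahn series under division and under square roots of elements with positive real leading coefficient. For Puiseux series this is classical; for Hahn series with arbitrary well-ordered real supports the formal statement is Neumann's lemma but the convergence statement depends on exactly which class of "convergent real-power series" the paper intends, and that lemma should be stated and proved (or referenced) explicitly. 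Two smaller points worth a sentence each in a written version: the leading coefficients of $\tr(A^*A)$ and of the discriminant are automatically positive reals because these series take nonnegative real values for large real $t$, which is what legitimizes taking the square root and fixes the labelling $\sigma_1\ge\sigma_2$ consistently for all large $t$; and the passage from "bounded convergent Hahn series" to "converges as $t\to\infty$" should be spelled out (leading exponent $\le 0$, hence limit equals the coefficient of $t^0$). With those lemmas supplied, your proof is complete, and it moreover yields the explicit value of the limit, not just its existence.
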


In fact this limit depends only on the top order term in $A=Bt^\alpha+o(t^\alpha), \alpha \in \mathbb{R}_{\geq 0}$ for a non-zero matrix $B$ with complex entries, which means that $\operatorname{VAL}([A]_{\mathbb{K}^*})$ makes sense as well in the case when $A$ has divergent entries. In that way we get valuation which is described explicitly below in this section (Theorem \ref{thm_VALexpl}).

\begin{definition} The closure (in Euclidean topology) of the image of any subvariety in $\overline{PSL_2(\mathbb{K})}$ under $\operatorname{VAL}\colon\overline{PSL_2(\mathbb{K})}\rightarrow\overline{PSL_2(\mathbb{C})}$ is called its \emph{$PSL_2$-phase tropicalization.}
\end{definition}

There is a natural compactification $\overline{\mathbb{H}^3}$ of the hyperbolic space with the boundary at infinity identified with $\mathbb{C}P^1$, by considering the image of rank one Hermitian matrices. The amoeba map $\varkappa$ extends to that compactification by the projection of $Q \cong \mathbb{C}P^1 \times \mathbb{C}P^1$ to the first factor\footnote{The quadric $Q$ is parametrized as $([x_0:x_1],[y_0:y_1])\mapsto [x_0y_0:x_0y_1:x_1y_0:x_1y_1].$}, thinking of it as the boundary at infinity of $\mathbb{H}^3$.\\
Also, there is a second amoeba map $\varkappa^*(A)=A^*A$ corresponding to the left quotient by $PSU(2)$ and the left polar decomposition, extended at infinity via projection to the second factor of $Q$. Note that the distances  from $\varkappa(A)$ and $\varkappa^*(A)$ to $O$ coincide.
In both left and right polar decompositions, the unitary parts are equal, giving naturally the following

\begin{definition} The spherical coamoeba map is $\varkappa^\circ\colon PSL_2{\mathbb{C}}\rightarrow PSU(2)$. The double amoeba map is $\widehat{\varkappa}(A)=(\varkappa(A),\varkappa^*(A)), A \in \overline{PSL_2(\mathbb{C})}$. 
\end{definition}

The image of  $\widehat{\varkappa}$ is the subvariety $\widehat{Q} \subset \overline{\mathbb{H}^3} \times \overline{\mathbb{H}^3}$ of real dimension 5 of all pairs of points at equal distance to $O$.

\begin{remark}If $\det(A)=1,$ $\varkappa^\circ([A]_{\mathbb{C}^*})=[A+(A^*)^{-1}]_{\mathbb{R}^*}\in\mathbb{R}P^3\cong PSU(2).$
\end{remark}

For a point $P\in\mathbb{H}^3\backslash\{O\}$, denote by $P^\infty$ its projection to the boundary $\mathbb{C}P^1$ by the geodesic ray emanating from $O$ and passing through $P.$ We think of $\mathbb{H}^3$ as a cone over $\mathbb{C}P^1$ with the vertex $O$, getting for all other points $P$ pairs $(d(P,O),P^\infty)$. A point of $(P_1,P_2)\in\mathbb{H}^3\times\mathbb{H}^3$ from $\widehat{Q},$ i.e. satisfying $d(P_1,O)=d(P_2,O),$ may be identified with a point of a cone over $\mathbb{C}P^1\times\mathbb{C}P^1\cong Q,$ with the vertex corresponding to  $P_1=P_2=O$ and all other points corresponding to $(d(P_1,O),(P_1^\infty,P_2^\infty))$ in $(0,\infty)\times Q$.\\
This represents $\widehat{Q}$ as a real cone over $Q$. The fiber of the double amoeba map $\widehat{\varkappa}$ over the vertex $O$ is $PSU(2) \simeq \mathbb{R}P^3$, the fiber over its base $\{\infty\}\times Q$ is a single point, and the fiber over all other points is a circle. We describe now the corresponding circle bundle.\\

Take a point $(\alpha,(B_1,B_2))\in\widehat{Q}$ with $\alpha>0$ and $B_1,B_2\in\mathbb{C}P^1$. The fiber of $\widehat{\varkappa}$ over this point is naturally isomorphic to $\mathcal{S}_{(B_1,B_2)} = \{U\in PSU(2) \ |B_1=UB_2U^{-1}\}$, corresponding to all rotations around $O$ and sending $B_2$ to $B_1.$ In particular, we see that it doesn't depend on $\alpha$, and this gives rise to the circle bundle $\mathcal{S}$ over $Q.$ The following lemma characterizes $\mathcal{S}$. 

\begin{proposition} The fiber  of $\mathcal{S}$ over a point $[B]_{\mathbb{C}^*}\in Q$ is naturally isomorphic to $\{[cB]_{\mathbb{R}^*}|c\in\mathbb{C}^*\}.$\end{proposition}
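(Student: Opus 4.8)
The plan is to build an explicit, choice-free bijection between the two circles and to verify it is equivariant for the natural $U(1)$-actions, which forces it to be an isomorphism. First I would fix a representative of the given point of $Q$ in its singular-value form $B = v_1 v_2^*$, where $v_1, v_2 \in \mathbb{C}^2$ are unit vectors, unique up to independent phases and an overall positive scalar. The direct computations $BB^* = v_1 v_1^*$ and $B^*B = v_2 v_2^*$ then identify the two boundary points attached to $[B]_{\mathbb{C}^*}$ by the extended amoeba maps: $B_1 = \varkappa([B]) = [v_1]$ and $B_2 = \varkappa^*([B]) = [v_2]$ in $\mathbb{C}P^1 = \partial\mathbb{H}^3$. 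In particular the fiber is $\mathcal{S}_{(B_1,B_2)} = \{U \in PSU(2) \mid U v_2 = \xi v_1 \text{ for some } \xi \in U(1)\}$, since the rotation action on the boundary sends $[v_2 v_2^*]$ to $[(Uv_2)(Uv_2)^*]$.

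Next I would define the candidate map $\Phi\colon \mathcal{S}_{(B_1,B_2)} \to \{[cB]_{\mathbb{R}^*} \mid c \in \mathbb{C}^*\}$ by $\Phi(U) = [U\, v_2 v_2^*]_{\mathbb{R}^*}$, i.e. by left-multiplying the Hermitian representative $v_2 v_2^*$ of $B_2$ by $U$. The key identity is $U\, v_2 v_2^* = (Uv_2) v_2^* = \xi\, v_1 v_2^* = \xi B$, so that $\Phi(U) = [\xi B]_{\mathbb{R}^*}$ indeed lands on the target circle. I would then check that $\Phi$ is well defined: replacing $U$ by $-U$ (the other lift to $PSU(2)$) or $v_2$ by $e^{i\beta} v_2$ leaves $[U v_2 v_2^*]_{\mathbb{R}^*}$ unchanged, and rescaling the chosen representative $B = v_1 v_2^*$ alters $\xi$ and $B$ by reciprocal phases, so $\xi B$ is intrinsic; the target set is insensitive to the overall scale of $B$.

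The decisive step is equivariance. The fiber is a torsor under the right action of the stabilizer circle $T_2 = \{V \in PSU(2) \mid V[v_2] = [v_2]\}$, where $V v_2 = \eta v_2$ with $\eta \in U(1)$ defines an isomorphism $T_2 \cong U(1)/\{\pm 1\}$, while the target is a torsor under $U(1)/\{\pm 1\}$ acting by $[cB]_{\mathbb{R}^*} \mapsto [\eta c B]_{\mathbb{R}^*}$. The computation $\Phi(UV) = [U(Vv_2)v_2^*]_{\mathbb{R}^*} = [\eta\, U v_2 v_2^*]_{\mathbb{R}^*} = \eta\,\Phi(U)$ shows that $\Phi$ intertwines these two actions through the same isomorphism, and an equivariant map of torsors over isomorphic groups is automatically a bijection (and a diffeomorphism). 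This yields the asserted natural identification.

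The part I expect to require the most care is the identification of conventions in the first step: matching the two boundary points $B_1, B_2$ produced by $\varkappa$ and $\varkappa^*$ with the factors of the parametrization of $Q$, so that the representative appearing in $\Phi(U) = [\xi B]_{\mathbb{R}^*}$ is exactly the matrix $B$ representing the given point of $Q$, and not a complex-conjugated or transposed variant. Writing $B$ in the singular-value form $B = v_1 v_2^*$ is what makes this bookkeeping transparent and removes the apparent conjugation ambiguity; once conventions are fixed, the remaining verifications are the routine torsor computations above.
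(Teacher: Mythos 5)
Your argument is correct, but it takes a genuinely different route from the paper's. The paper disposes of the proposition in one sentence by invoking the classical identification of the quadric $Q\cong\mathbb{C}P^1\times\mathbb{C}P^1$ with the space of oriented real lines (geodesics) of $\mathbb{R}P^3\cong PSU(2)$: each such line is a coset of a circle subgroup, and that coset is the fiber of $\mathcal{S}$ over the corresponding point of $Q$. You instead build the isomorphism by hand: factoring $B=v_1v_2^*$, computing $BB^*=v_1v_1^*$ and $B^*B=v_2v_2^*$ to identify the fiber as $\{U\in PSU(2)\mid Uv_2=\xi v_1,\ \xi\in U(1)\}$, mapping $U\mapsto[Uv_2v_2^*]_{\mathbb{R}^*}=[\xi B]_{\mathbb{R}^*}$, and deducing bijectivity from equivariance for the stabilizer circle of $[v_2]$ acting on the source and $U(1)/\{\pm1\}$ acting on the target. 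What your version buys is an explicit formula for the isomorphism, a verification that it is canonical (insensitive to the phase ambiguities in $v_1,v_2$, to the lift of $U$ to $SU(2)$, and to the representative of $[B]_{\mathbb{C}^*}$), and a careful treatment of the conjugation conventions matching the two factors of $Q$ with the boundary points produced by $\varkappa$ and $\varkappa^*$ --- none of which the paper spells out. What the paper's version buys is brevity and a conceptual placement of $\mathcal{S}$ inside classical line geometry; your coset $\{U\mid Uv_2=\xi v_1\}$ is precisely the oriented line the paper has in mind, so the two arguments describe the same object, yours being the self-contained computational unwinding of the paper's one-line geometric assertion.
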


\begin{proof} A geometric way to see this isomorphism is via the classical identification of the complex quadric surface $Q$ and oriented real lines in projective three-space, which is $PSU(2)$. Such a line represents a coset in this group and corresponds to the fiber over $[B]_{\mathbb{C}^*}$ of $\mathcal{S}$.
\end{proof}


\begin{theorem}
\label{thm_VALformula}
 Consider $[A]_{\mathbb{K}^*}\in\overline{PSL_2(\mathbb{K})}$, with $A=Bt^\alpha+o(t^\alpha), \alpha \in \mathbb{R}_{\geq 0}$. If $\det(A)=0$, then $\operatorname{VAL}([A]_{\mathbb{K}^*})=(\infty, [B]_{\mathbb{C}^*}).$ Otherwise, assume that $A$ is normalized so that $\det(A)=1$. If $\alpha>0$ we have $\det(B)=0$, and then $\operatorname{VAL}([A])=(\alpha, [B]_{\mathbb{R}^*})$. If $\alpha=0$ and then $\det(B)=1$, we have $\operatorname{VAL}(A)=(0, \varkappa^\circ([B]_{\mathbb{C}^*})).$
\label{thm_VALexpl}
\end{theorem}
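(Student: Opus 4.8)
The plan is to evaluate the defining limit $\lim_{t\to\infty}R_{(\log t)^{-1}}[A(t)]_{\mathbb C^*}$ directly, splitting into the three cases of the statement. By the remark after Theorem \ref{thm_VALlim} the limit depends only on the pair $(B,\alpha)$, and since $R_h(\mu A)=|\mu|^h e^{i\arg\mu}R_h(A)$ for $\mu\in\mathbb C^*$, the map $R_h$ descends to $\mathbb C^*$-classes, which justifies normalizing $\det A=1$ whenever $\det A\neq 0$. The two degenerate cases are short. If $\det A=0$ then $\det B=0$, and $t^{-\alpha}A(t)\to B$ gives $[A(t)]_{\mathbb C^*}\to[B]_{\mathbb C^*}\in Q$; since $R_h$ restricts to $\mathrm{id}_Q$ for every $h$, continuity of the extended action yields the value $(\infty,[B]_{\mathbb C^*})$ on the base $\{\infty\}\times Q$ of the cone. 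If $\alpha=0$ then $A(t)\to B$ with $\det B=1$; writing the polar decomposition $B=P_BU_B$ and using $M^{1/\log t}\to I$ for $M$ near the fixed positive-definite $P_B$, one gets $R_{(\log t)^{-1}}(A(t))\to U_B$, whose class $\varkappa^\circ([B]_{\mathbb C^*})$ is the value $(0,\varkappa^\circ([B]_{\mathbb C^*}))$ at the cone vertex.

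The main case is $\alpha>0$, where the normalization $\det A=1$ forces $\det B=0$, so $B$ has rank one while $A$ remains full rank; I write its leading singular data as $B=\sqrt\lambda\,v\hat w^*$ with unit vectors $v,\hat w$ and $\lambda=\|B\|^2$. Because $t^{-2\alpha}AA^*\to\lambda vv^*$ and $t^{-2\alpha}A^*A\to\lambda\hat w\hat w^*$ each have a simple nonzero eigenvalue, the singular values and phase-matched singular vectors of $A$ converge, the two singular values growing like $\sqrt\lambda\,t^{\alpha}$ and $\lambda^{-1/2}t^{-\alpha}$. Raising $P=\sqrt{AA^*}$ to the power $(\log t)^{-1}$ sends these to $e^{\pm\alpha}$, so $P^{(\log t)^{-1}}\to e^{\alpha}vv^*+e^{-\alpha}v^\perp(v^\perp)^*$, while the unitary factor converges to $U_\infty=v\hat w^*+v^\perp(\hat w^\perp)^*\in SU(2)$. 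Multiplying the two limits gives the explicit nondegenerate limit matrix
\[
M_\infty=e^{\alpha}v\hat w^*+e^{-\alpha}v^\perp(\hat w^\perp)^*,\qquad \operatorname{VAL}([A])=[M_\infty]_{\mathbb C^*}.
\]

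It remains to read off the cone coordinates of $[M_\infty]$. A direct computation gives $\varkappa(M_\infty)=e^{2\alpha}vv^*+e^{-2\alpha}v^\perp(v^\perp)^*$ and $\varkappa^*(M_\infty)=e^{2\alpha}\hat w\hat w^*+e^{-2\alpha}\hat w^\perp(\hat w^\perp)^*$, so the double amoeba point lies at radius $\alpha$ over $([v],[\hat w])\in Q$, recovering the base point $[B]_{\mathbb C^*}$. For the fiber coordinate I use that the leading part of $M_\infty$ as $\alpha\to\infty$ is $e^{\alpha}v\hat w^*=(e^{\alpha}/\sqrt\lambda)\,B$, a real positive multiple of $B$; under the isomorphism of the Proposition between the fiber of $\mathcal S$ over $[B]_{\mathbb C^*}$ and $\{[cB]_{\mathbb R^*}\mid c\in\mathbb C^*\}$, a real positive leading coefficient corresponds to $c\in\mathbb R^*$, i.e. to $[B]_{\mathbb R^*}$. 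This gives $\operatorname{VAL}([A])=(\alpha,[B]_{\mathbb R^*})$.

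The main obstacle is this final identification of the fiber coordinate. Everything leading to $M_\infty$ is convergence of singular data, routine once one notes that $BB^*$ has simple nonzero spectrum; but pinning the circle coordinate requires tracking the phases of the singular vectors, reconciling the Hermitian conjugate appearing in the singular value decomposition with the transpose used in the parametrization $Q\cong\mathbb C P^1\times\mathbb C P^1$, and matching the explicit $M_\infty$ with the intrinsic trivialization of $\mathcal S$ furnished by the Proposition. This is the step I expect to expand most in a complete write-up.
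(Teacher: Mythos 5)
The paper states Theorem~\ref{thm_VALformula} without any proof (only the existence statement, Theorem~\ref{thm_VALlim}, is even promised a proof in a forthcoming version), so there is no argument of record to compare yours against; judged on its own, your direct evaluation of the limit through polar-decomposition asymptotics is correct, and the case analysis ($\det A=0$ via $R_h|_Q=\mathrm{id}_Q$; $\alpha=0$ via $P(t)^{1/\log t}\to I$; $\alpha>0$ via the spectral splitting of $AA^*$) is the natural route. Two remarks that would let you close the write-up more cleanly. First, the step you rightly flag as delicate --- convergence of the unitary factor without singular-vector phase ambiguity --- can be bypassed entirely: for $\det A=1$ one has $P+P^{-1}=\tr(P)\cdot I$, hence $U=P^{-1}A=\tr(P)^{-1}\bigl(A+(A^*)^{-1}\bigr)$ (this is exactly the paper's Remark~1), and since $(A^*)^{-1}=(A^c)^*$ one gets $A+(A^*)^{-1}=\bigl(B+(B^c)^*\bigr)t^{\alpha}+o(t^{\alpha})$, whose projective class converges with no phase bookkeeping; this also fixes the compatible choice of $v^\perp,\hat w^\perp$ for you, namely $(B^c)^*=\sqrt{\lambda}\,v^\perp(\hat w^\perp)^*$. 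Second, your fiber identification need not go through the ``leading part as $\alpha\to\infty$'' heuristic: your limit matrix satisfies $M_\infty=\lambda^{-1/2}\bigl(e^{\alpha}B+e^{-\alpha}(B^c)^*\bigr)$, which is precisely the matrix that the paper's Remark~2 assigns to the cone point $(\alpha,[B]_{\mathbb{R}^*})$, so the identification is a one-line comparison with that formula (and your computation in effect supplies the derivation of that formula, which the paper asserts without justification). With these substitutions your argument is complete and consistent with all the surrounding statements in the paper.
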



\begin{remark}In this theorem, we are using the cone representation of $\mathbb{C}P^3$ in which it appears as fibration over $\widehat{Q}$ with the fiber over the vertex being $PSU(2),$ the base identified with $Q,$ and all other points having the form $(\alpha,[B]_{\mathbb{R}^*})$ with $\alpha>0$ and $det (B)=0.$ The latter type of points can be explicitly mapped to $PSL_2\mathbb{C}$ by
$$(\alpha,[B]_{\mathbb{R}^*})\mapsto [e^\alpha B+e^{-\alpha}(B^c)^*)]_{\mathbb{C}^*},$$ where $B^c=\begin{pmatrix}d&-b\\-c&a\end{pmatrix}$ is the adjugate matrix for $B=\begin{pmatrix}a&b\\c&d\end{pmatrix}.$ This demonstrates the advantage of the cone picture where the expression of $\operatorname{VAL}$ has a much simpler form than it has in the standard matrix presentation.
\end{remark}

Next is described the tropicalization of a constant family. For variety $V$ defined over $\mathbb{C}$ take its $\mathbb{K}$-points $V(\mathbb{K}).$

\begin{theorem}
\label{thm_constant}
 Let $V$ be a subvariety of $\overline{PSL_2(\mathbb{C})}$ with no irreducible components in $Q.$ Then the following representation holds:$$\operatorname{VAL}(V(\mathbb K))=\{0\}\times\varkappa^\circ(V\cap PSL_2(\mathbb{C}))\cup(0,\infty)\times\mathcal{S}|_{V\cap Q}\cup \{\infty\}\times (V\cap Q),$$ where $\varkappa^\circ$ is the spherical coamoeba map, and $\mathcal{S}|_{V\cap Q}$ is the total space of the circle bundle $\mathcal{S}$ restricted to $V\cap Q$.\end{theorem}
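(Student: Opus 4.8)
The plan is to prove the asserted equality by two inclusions, organizing everything according to the trichotomy of the explicit formula in Theorem~\ref{thm_VALformula}. I write a $\mathbb{K}$-point of $V$ as a family $A=A(t)$ with $[A(t)]\in V$ for $t$ large, and record its leading data $A=Bt^\alpha+o(t^\alpha)$. The three summands on the right correspond exactly to the three cases of that formula: $\alpha=0$ with $\det B=1$ (the coamoeba stratum), $0<\alpha<\infty$ with $\det B=0$ (the circle-bundle stratum), and $\det A=0$, i.e.\ the value $\alpha=\infty$ (the stratum at the boundary $Q$). First I would pin down which stratum each $\mathbb{K}$-point lands in and prove the inclusion $\subseteq$ stratum by stratum; then realize every point of the right-hand side; and finally check that the three pieces assemble into a closed set, so that taking the closure on the left introduces nothing new.

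For the inclusion $\subseteq$ the essential input is that $V$ is closed and $\det$ is continuous. If $\det A$ is a unit, normalize $\det A=1$; then $[A(t)/t^\alpha]\to[B]$ in $\mathbb{C}P^3$ and, since $[A(t)]\in V$ with $V$ closed, the leading class $[B]$ lies in $V$. When $\alpha=0$ this gives $B\in V\cap PSL_2(\mathbb{C})$ and $\operatorname{VAL}=(0,\varkappa^\circ([B]))$; when $\alpha>0$ the expansion $\det A=\det(B)t^{2\alpha}+\cdots=1$ forces $\det B=0$, so $[B]\in V\cap Q$ and $\operatorname{VAL}=(\alpha,[B]_{\mathbb{R}^*})$ is a point of $(0,\infty)\times\mathcal{S}|_{V\cap Q}$ by the fiber description in the Proposition. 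If instead $\det A=0$, then $\det B=0$, $[B]\in V\cap Q$ and $\operatorname{VAL}=(\infty,[B]_{\mathbb{C}^*})$. Hence the image is contained in the union on the right. The coamoeba and boundary strata are immediately realized by \emph{constant} $\mathbb{K}$-points: a complex point $B\in V\cap PSL_2(\mathbb{C})$ gives $(0,\varkappa^\circ([B]))$, and a complex point $[B]\in V\cap Q$ gives $(\infty,[B]_{\mathbb{C}^*})$.

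The core of the proof is realizing the whole middle stratum $(0,\infty)\times\mathcal{S}|_{V\cap Q}$, which requires genuinely non-constant families. Fix $[B]_{\mathbb{C}^*}\in V\cap Q$. Because no component of $V$ lies in $Q$, the point $[B]$ lies in the closure of $V\cap PSL_2(\mathbb{C})$, so by arc selection there is a holomorphic arc $\gamma(s)$ in $V$ (in homogeneous coordinates) with $\gamma(0)=B$ and $\gamma(s)\notin Q$ for small $s\neq 0$; write $\det\gamma(s)=c\,s^{k}+o(s^{k})$ with $k\geq 1$ and $c\in\mathbb{C}^*$. For $\beta>0$ and $\lambda\in\mathbb{C}^*$ set $A(t)=\gamma(\lambda t^{-\beta})\in V(\mathbb{K})$. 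Then $A(t)\to B$ while $\det A(t)=c\lambda^{k}t^{-\beta k}+\cdots$, so after normalizing by $\sqrt{\det A(t)}$ the leading data becomes $(c\lambda^{k})^{-1/2}B\cdot t^{\beta k/2}$. By Theorem~\ref{thm_VALformula} this yields
\[
\operatorname{VAL}([A]_{\mathbb{K}^*})=\Bigl(\tfrac{\beta k}{2},\,[(c\lambda^{k})^{-1/2}B]_{\mathbb{R}^*}\Bigr).
\]
Choosing $\beta=2\alpha/k$ realizes any prescribed $\alpha\in(0,\infty)$, and letting $\arg\lambda$ vary makes $(c\lambda^{k})^{-1/2}$ sweep all phases, so $[(c\lambda^{k})^{-1/2}B]_{\mathbb{R}^*}$ runs over the entire fiber $\{[cB]_{\mathbb{R}^*}\mid c\in\mathbb{C}^*\}$ of $\mathcal{S}$ over $[B]_{\mathbb{C}^*}$ from the Proposition. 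As $[B]$ ranges over $V\cap Q$ this produces every point of the middle stratum.

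The main obstacle I expect is exactly this middle-stratum construction, together with the final closedness check. Producing the arc $\gamma$ through each point of $V\cap Q$ and controlling both the vanishing order $k$ of $\det$ along it and the phase of its leading coefficient is the delicate point; the computation $\det(t^\alpha B+t^{-\alpha}(B^c)^*)=\operatorname{tr}(B^c(B^c)^*)$ underlying the Remark is the model case ensuring the normalization behaves as claimed, but one must verify that an arc inside $V$ can always be chosen so that the swept set is exactly the fiber and not a proper subset. Separately, to pass from the image to its closure I would check that the three strata glue into a closed subset: sending $\alpha\to\infty$ in the middle stratum lands in $\{\infty\}\times(V\cap Q)$, while sending $\alpha\to 0$ lands among the $\alpha=0$ coamoeba values, so that no further limit points appear. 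Verifying that these $\alpha\to 0$ limits are already accounted for by $\varkappa^\circ(V\cap PSL_2(\mathbb{C}))$ is where I would expect the remaining technical care to be concentrated.
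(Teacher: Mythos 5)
Your proof is correct and follows essentially the same route as the paper's: the forward inclusion via showing the leading coefficient $[B]_{\mathbb{C}^*}$ lies in $V$ (the paper does this purely algebraically, substituting $A(t)=t^\alpha B+o(t^\alpha)$ into homogeneous $f$ vanishing on $V$, which sidesteps the convergence issue your ``$[A(t)/t^\alpha]\to[B]$'' implicitly assumes for general Hahn series), and the backward inclusion via constant families for the two end strata plus a curve through $[B]\in V\cap Q$ not contained in $Q$, monomially reparametrized to realize the middle stratum. The only cosmetic difference is that the paper applies an automorphism of $\mathbb{K}$ (replacing $t^\beta$ by $e^{-\beta\sigma/\gamma}t^{\alpha\beta/\gamma}$) to hit each prescribed point $(\alpha,[B]_{\mathbb{R}^*})$ exactly, whereas you substitute $s=\lambda t^{-\beta}$ and sweep $(\beta,\arg\lambda)$ over the whole fiber; also, your final closedness worry is unnecessary, since the theorem asserts an equality for the image $\operatorname{VAL}(V(\mathbb{K}))$ itself rather than for its closure.
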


\begin{proof} Take an $A(t)\in V(\mathbb{K})$ represented as $A(t)=t^\alpha B+o(t^\alpha)$. Then $[B]_{\mathbb{C}^*}\in V$, because for any homogeneous $f$ of degree $d$ vanishing on $V$ we get $0=f(A(t))=t^{d\alpha}f(B)+o(t^{d\alpha}),$ that is, $f(B) = 0$. Similarly, if $\det(A(t))=0$ we have $\det(B)=0$, that is $[B]_{\mathbb{C}^*}\in V\cap Q$, and $\operatorname{VAL}(A(t))=(\infty,[B]_{\mathbb{C}^*})$ belongs to the first component of the union. This proves one inclusion.

For the opposite inclusion, take an $A(t)$ for each element of the right-hand side. An element in the first set is some $(0,\varkappa^\circ(B))$, so put $A(t) = B$. An element of the third set is $(\infty,[B]_{\mathbb{C}^*})$ for some $[B]_{\mathbb{C}^*}\in V\cap Q,$ so take $A(t)=B.$

An element from the middle term is $(\alpha, [B]_{\mathbb{R}^*})$,
for some $\alpha>0$ and $[B]_{\mathbb{C}^*}\in V\cap Q$. Since no component of $V$ is contained in $Q$ there is a curve $C \subset V$ through $[B]_{\mathbb{C}^*}$, not contained in $Q$. Pick up local parametrization for it when $t\rightarrow \infty$ near the point $[B]_{\mathbb{C}^*}$. This parametrization is given by a matrix power series $A(t)$ with $\det(A(t))$ not identically zero power series. Normalize to have $\det(A(t))=1$. Then $A(t)=t^\gamma c B+o(t^\gamma)$ with $\gamma>0,$ and $c\in\mathbb{C}^*$. Make change of variables in $A(t)$ giving $\tilde{A}(t)$ by replacing each $t^\beta$ by $\exp(-\beta\gamma^{-1} \sigma)t^{\alpha\beta\gamma^{-1}},$ where $\sigma=\log(c)$. This comes from the necessity to obtain the image of the term $ct^{\gamma}$ in the desired form as $t^\alpha$, and it is an automorphism of $\mathbb{K}$ fixing $\mathbb{C}$.
Then $\tilde A(t)=t^\alpha B+o(t^\alpha) \in V(\mathbb{K})\cap PSL_2(\mathbb{K})$ and $\operatorname{VAL}(\tilde A(t))=(\alpha,[B]_{\mathbb{R}^*})$. \end{proof}

\section{\vspace{-2pt}Examples}
\begin{figure}
\includegraphics[width=0.8\textwidth]{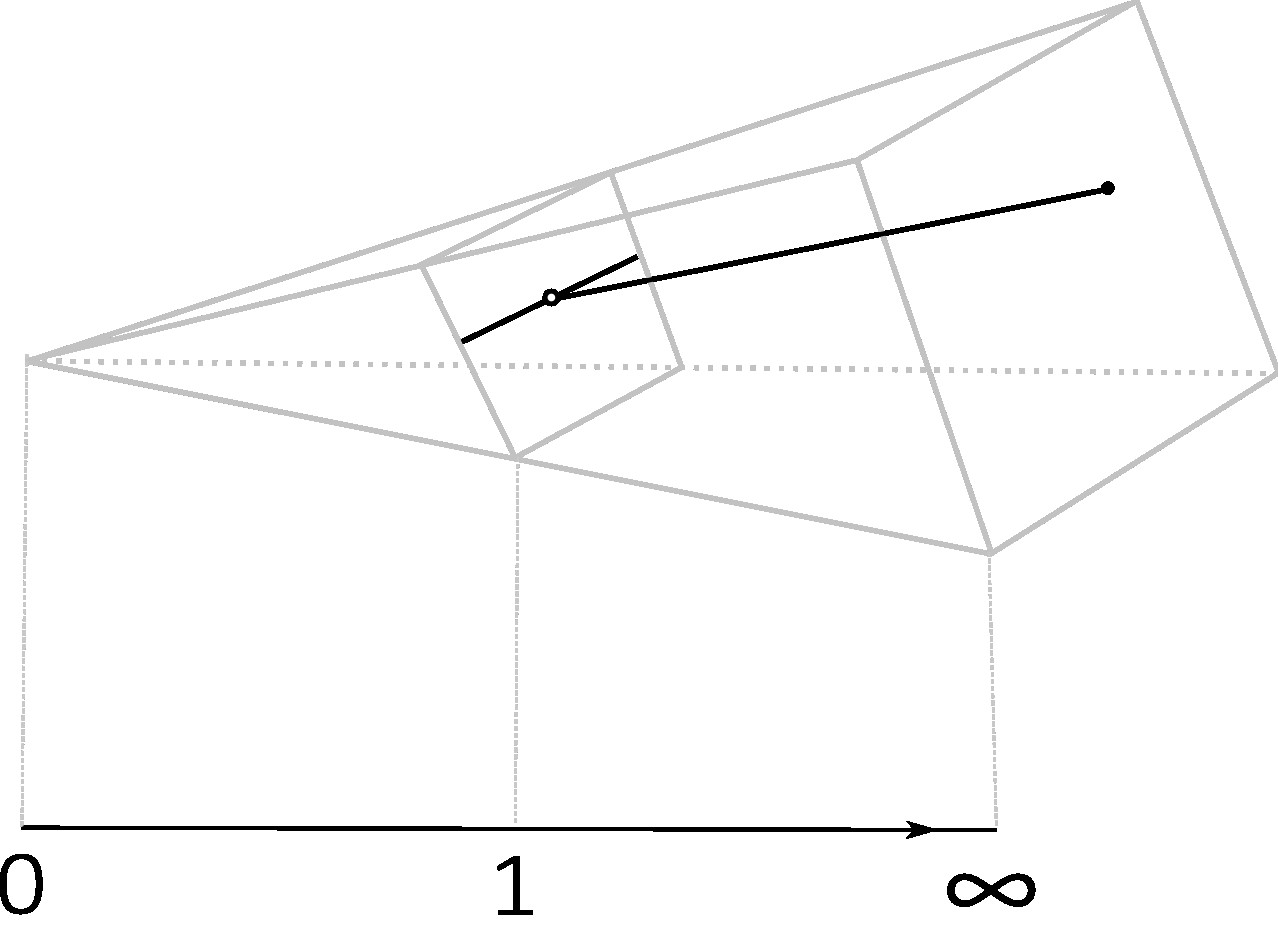}
\caption{This picture represents the image under $\operatorname{VAL}$ of a line tangent to the quadric $Q(\mathbb{K})$.}\label{fig_line}
\end{figure}
\textbf{Example 1.} We describe the image under $\operatorname{VAL}$ of a particular line $L$ tangent to the quadric $Q(\mathbb{K}).$ Consider $L\subset\mathbb{K}P^3$ parametrized by $$\mathbb{K}P^1\ni[z:w]\longmapsto Z(t)=\begin{bmatrix}
tw & z \\
0 & t^{-1}w
\end{bmatrix}\in\overline{\operatorname{PSL_2}\mathbb{K}}=\mathbb{K}P^3,$$ $t\in\mathbb{K}$ being the uniformizer. The point where $L$ intersects the quadric $Q(\mathbb{K})$ is $B_\infty=\begin{bmatrix}
0 & 1 \\
0 & 0
\end{bmatrix}$ corresponding to $w=0.$

For all other values of $[z:1]\in\mathbb{K}P^1$ and $z=ct^\gamma+o(t^\gamma),$ $c\in\mathbb{C}^*,$ we have three cases: $\gamma>1$, $\gamma = 1$ and $\gamma < 1$. Look for the limit at $\infty$ for $z$ represented by decreasing exponents.\\
In the first case, the dominant term of $Z(t)$ is $\begin{bmatrix}
0 & ct^\gamma \\
0 & 0
\end{bmatrix}.$ Projected by $\operatorname{VAL}$ on the cone picture of $\overline{PSL}_2\mathbb{C}=\mathbb{C}P^3$, it is a ray over $B_\infty$, each point with an $S^1$ in the circle bundle $\mathcal{S}$.\\
In the second case, the dominant term is $\begin{bmatrix}
t & ct\\
0 & 0
\end{bmatrix}$, giving a section of the $\mathcal{S}$ bundle over $Q(\mathbb{C})$ with points $\begin{bmatrix}
1 & 0 \\
0 & 0
\end{bmatrix}$ and $\begin{bmatrix}
0 & 1 \\
0 & 0
\end{bmatrix}$ excluded.\\

In the last case, the dominant term of $Z(t)$ is $\begin{bmatrix}
t & 0 \\
0 & 0
\end{bmatrix}$ giving a single point, which is one of the two points excluded in the previous case.\\
We note that the image is not connected (see Figure \ref{fig_line}) because the phase at any point $(1,B_\infty)$ with $\alpha= 1$ is missing.
This is a typical feature of images in $\mathbb{K}P^3$ under $\operatorname{VAL}.$\\

\textbf{Example 2.}
\begin{figure}
\includegraphics[width=0.8\textwidth]{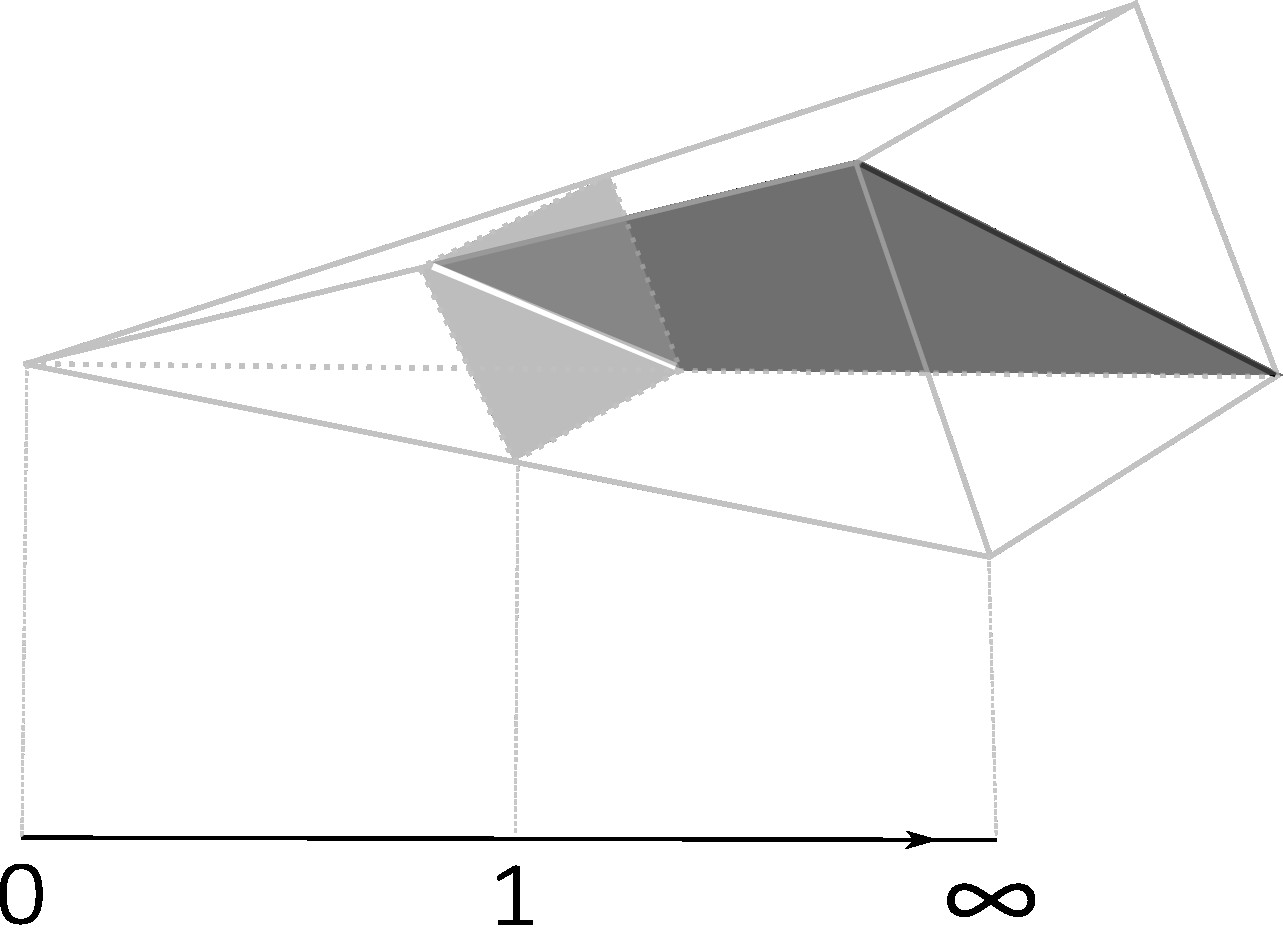}
\caption{The image of a quadric surface under $\operatorname{VAL}.$ Note that a curve at the critical height 1 is missing.}
\label{fig_exquadric}
\end{figure}
Consider the quadric surface $S$ over $\mathbb{K}$ defined by
 \begin{equation}\label{eq_aquadric} t^2\det(A(t))=(\tr(A(t)))^2,\end{equation}
where determinant and trace are considered polynomials in two-by-two matrices.\\

\begin{proposition}
The image $\operatorname{VAL}(S)$ (see Figure \ref{fig_exquadric}) is
\begin{equation}\label{eq_VALaquadric}\{1\}\times s(Q\backslash C)\cup(1,\infty)\times\mathcal{S}|_C\cup \{\infty\}\times C,\end{equation}
where $C$ is a curve on $Q$ given by $\tr=0,$ $s([B]_{\mathbb{C}^*}):=[(\tr(B))^{-1}B]_{\mathbb{R}^*}$ is a section of $\mathcal{S}$ defined away from $C$, and $\mathcal{S}|_C$ is the restriction of $\mathcal{S}$ to $C.$\end{proposition}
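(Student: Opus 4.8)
The plan is to read off the image directly from Theorem \ref{thm_VALexpl}, organizing the computation by $\det(A)$ and by the order $\alpha$ of a normalized representative, and then to realize every point of the claimed union by an explicit $\mathbb{K}$-point. Because $t^2\det(A)=(\tr(A))^2$ is homogeneous of degree two, it descends to $\mathbb{K}P^3$ and may be tested on any representative of $[A]_{\mathbb{K}^*}$. First I would treat the locus $\det(A)=0$: there the equation forces $(\tr(A))^2=0$, hence $\tr(A)=0$, so the leading term $B$ in $A=t^\alpha B+o(t^\alpha)$ satisfies $\det(B)=\tr(B)=0$, i.e. $[B]_{\mathbb{C}^*}\in C$, and Theorem \ref{thm_VALexpl} yields $\operatorname{VAL}=(\infty,[B]_{\mathbb{C}^*})$. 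Running over all of $C$, realized already by the constant matrices $A=B$, this is precisely the summand $\{\infty\}\times C$.

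For $\det(A)\neq 0$ I normalize $\det(A)=1$, so the equation becomes $(\tr(A))^2=t^2$; replacing $A$ by $-A$ if needed (the same class in $PSL_2$) I may take $\tr(A)=t$. Since $\tr(A)$ has order $1$ while each entry of $A$ has order at most $\alpha$, necessarily $\alpha\ge 1$, and $\det(A)=1$ with $\alpha>0$ forces $\det(B)=0$ as in Theorem \ref{thm_VALexpl}. The leading trace $t^\alpha\tr(B)$ now splits into two cases. If $\alpha=1$ then $\tr(B)=1\neq0$, so $[B]_{\mathbb{C}^*}\in Q\setminus C$ and $[B]_{\mathbb{R}^*}=[(\tr B)^{-1}B]_{\mathbb{R}^*}=s([B]_{\mathbb{C}^*})$, giving a point of $\{1\}\times s(Q\setminus C)$. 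If $\alpha>1$ then $\tr(B)=0$, so $[B]_{\mathbb{C}^*}\in C$ and $\operatorname{VAL}=(\alpha,[B]_{\mathbb{R}^*})\in(1,\infty)\times\mathcal{S}|_C$. The mechanism behind the missing curve is visible here: at the critical height $\alpha=1$ the trace of $B$ must equal $1$, so no point of $S$ can land in $\{1\}\times\mathcal{S}|_C$. This finishes the inclusion of $\operatorname{VAL}(S)$ into the asserted set.

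For the reverse inclusion I would realize each point through the spectral form of a trace-$t$, determinant-one matrix. The eigenvalues $\lambda_\pm=\tfrac12(t\pm\sqrt{t^2-4})$ have orders $1$ and $-1$, and for $P_+=\begin{pmatrix}1&N\\0&0\end{pmatrix}$ the matrix $A=\lambda_+P_++\lambda_-(I-P_+)=\begin{pmatrix}\lambda_+&(\lambda_+-\lambda_-)N\\0&\lambda_-\end{pmatrix}$ has $\tr(A)=t$, $\det(A)=1$, hence lies on $S$ for every $N\in\mathbb{K}$. Taking $N=0$ gives leading matrix $t\begin{pmatrix}1&0\\0&0\end{pmatrix}$, and taking $N=c\,t^{\alpha-1}$ with $\alpha>1$ gives $c\,t^\alpha B_0$ where $B_0=\begin{pmatrix}0&1\\0&0\end{pmatrix}$. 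Since $\tr$ and $\det$ are conjugation-invariant, $gAg^{-1}\in S$ for all $g\in GL_2(\mathbb{C})$; conjugating the first family sweeps out every trace-one rank-one matrix, hence all of $\{1\}\times s(Q\setminus C)$, while conjugating the second family and letting the phase $c\in\mathbb{C}^*$ vary realizes every fiber of $\mathcal{S}|_C$ at every height $\alpha>1$, matching the Proposition's description of the fibers as $\{[cB_0]_{\mathbb{R}^*}\}$.

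The principal obstacle is exactly this last step, the realization of all phases in the circle fibers over $C$. The naive attempt $A=c\,t^\alpha B_0+tD+E$ with constant corrections is over-determined: cancelling the intermediate orders of $\det(A)$ forces $\tr(B_0D)=\tr(B_0E)=0$, so $D$ and $E$ are upper triangular, and then the polarized determinant $\langle D,E\rangle=\tr(D)\tr(E)-\tr(DE)$ cannot be made to vanish against the remaining trace and determinant conditions. The spectral construction circumvents this by letting the necessary growth be carried by the off-diagonal entry $(\lambda_+-\lambda_-)N$ of a genuine idempotent. A final point I would emphasize is that the asserted equality is at the level of the set-theoretic image, not of its Euclidean closure: as $[B]_{\mathbb{C}^*}\to C$ the phase of $(\tr B)^{-1}$ is unconstrained, so the closure of $\{1\}\times s(Q\setminus C)$ already contains all of $\{1\}\times\mathcal{S}|_C$, which is the curve reported as missing in Figure \ref{fig_exquadric}; the statement is thus to be read exactly as in Example 1.
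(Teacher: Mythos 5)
Your proof is correct, and its first half coincides with the paper's: both directions of the order comparison in $t^2\det(A)=(\operatorname{tr}(A))^2$ after normalizing $\det(A)=1$ give $\alpha\geq 1$, with $\operatorname{tr}(B)=\pm 1$ at the critical height and $\operatorname{tr}(B)=0$ above it; your normalization $\operatorname{tr}(A)=t$ makes this cleaner than the paper's discussion of dominant terms. The genuine divergence is in the reverse inclusion. The paper reduces to orbits of the conjugation action of $PSU(2)$ (under which $\operatorname{VAL}$ is equivariant and the equation of $S$ is invariant) and then exhibits three explicit closed-form families of witnesses, needing two distinct families at $\alpha=1$ precisely because unitary conjugation is not transitive on rank-one idempotents. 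You instead build a single spectral family $\lambda_+P_++\lambda_-(I-P_+)$ with eigenvalues $\lambda_\pm=\tfrac12(t\pm\sqrt{t^2-4})$ of orders $1$ and $-1$ and a free off-diagonal parameter in the idempotent $P_+$, and then conjugate by all of $GL_2(\mathbb{C})$. This is legitimate and somewhat slicker: you never need an equivariance statement for $\operatorname{VAL}$, only the evident fact that constant conjugation conjugates the leading coefficient, after which Theorem~\ref{thm_VALexpl} applies directly to $gAg^{-1}$; and transitivity of $GL_2(\mathbb{C})$-conjugation on rank-one idempotents and on nonzero nilpotents covers $\{1\}\times s(Q\setminus C)$ and every circle fiber of $\mathcal{S}|_C$ at every height in one stroke. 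What the paper's version buys in exchange is completely explicit matrix witnesses for each stratum, which is useful for the figure and for seeing the missing locus concretely. Your closing observation, that the Proposition describes the set-theoretic image rather than its closure and that the closure of $\{1\}\times s(Q\setminus C)$ already fills in $\{1\}\times\mathcal{S}|_C$, is correct and consistent with the caption of Figure~\ref{fig_exquadric}.
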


\begin{proof} First we show the inclusion of $\operatorname{VAL}(S)$ in (\ref{eq_VALaquadric}). Consider $A(t)=t^\alpha B+o(t^\alpha)$ such that $\det(A(t))=1$ and satisfying (\ref{eq_aquadric}). Then the top-degree part of the left-hand side of the defining equation for $S$ becomes $(\tr(B))^2t^{2\alpha} + o(t^{2\alpha})$. It implies that $\alpha\geq 1$ because otherwise, the right-hand side of (\ref{eq_aquadric}) would be the only dominant term and the equation doesn't hold. The same is true in the case $\alpha=1$ and $\tr(B)=0,$ which explains why  $C$ is missing when $\alpha = 1$. If $\alpha=1$ and $\tr(B)\neq 0$ for the equation (\ref{eq_aquadric}) could exist solution $A(t)=Bt+o(t)$ for which $\tr(B) = \pm 1$, i.e. $\operatorname{VAL}(A(t))$ must be in the image of the section $s([B]_{\mathbb{C}^*})=[(\tr(B))^{-1}B]_{\mathbb{R}^*}$. If $\alpha>1$ we have again a single dominant term, but now coming from the right-hand side, unless $\tr(B)=0$, and it belongs to the second component of the union. Finally, if $\det(A(t))=0$ we get that $\operatorname{VAL}(A(t))$ belongs to the third component of the union because the top-order part of $A(t)$ satisfies $(\tr(B))^2=0$.

For the other inclusion we need to find an $A(t)$ with $\operatorname{VAL}(A(t))=p$ for every $p$ in (\ref{eq_VALaquadric}). As $\operatorname{VAL}$ is equivariant with respect to both left and right $PSU(2)$ action, and the equation of $S$ is symmetric by the conjugation by $PSU(2)$, the problem is reduced to orbits. The orbits of the images by $\operatorname{VAL}$ of the following sets of points on $S$ by this action span the whole (\ref{eq_VALaquadric}):\begin{itemize}[leftmargin=*]
 \item $\alpha=\infty$,  $\begin{bmatrix}0 & 1\\ 0& 0\end{bmatrix}_{\mathbb{K}^*}$;
\item $\alpha\in(1,\infty),$  $\begin{bmatrix}t & \sigma t^\alpha\\ -{(\sigma t^\alpha)}^{-1}& 0\end{bmatrix}_{\mathbb{K}^*}$ for $\sigma\in\mathbb{C}^*$;
\item $\alpha=1$, $\begin{bmatrix}t-t^{-1}&-t^{-1}\\t^{-1}&t^{-1}\end{bmatrix}_{\mathbb{K}^*}$ and $\begin{bmatrix}t-t^{-1}&\sigma^{-1}t^{-3}\\-\sigma t&t^{-1}\end{bmatrix}_{\mathbb{K}^*}$ for $\sigma\in\mathbb{C}^*$.
\end{itemize}

\end{proof}



\section{Further development}
The tropicalization for groups different from $PSL_2(\mathbb{C})$ is a meaningful direction for subsequent research. One principle we follow is that our tropicalization procedure is not about groups or matrices, but is rather dictated by order of degenerations of subvarieties on a quadric, given here by the equation $\det=0$. In particular, it makes sense in any dimension of the projective space with a spherical coamoeba map only, but not amoeba map and the non-phase tropicalization, surviving in this generalization. The first development we plan to make is to explore the case of the complex projective plane with a conic. For instance,  the relation of our degeneration diagrams with a version of floor diagrams introduced by Brugallé (\cite{B15}) will be clarified. It is expected that our techniques provide a convenient tool to investigate topological invariants of complex varieties in the spirit of (\cite{KZ}) and their complements, and to count geometric objects like curves, first proving the necessary correspondence theorems.\vspace{8pt}



$\mbox{\textbf{Acknowledgements}}$ The authors would like to express their sincere gratitude to Velichka Milousheva for enduring support and encuragement.\vspace{-4pt}

\bibliographystyle{amsplain}

\ \\
\noindent\ \\
Mikhail Shkolnikov, Institute of Mathematics and Informatics, Bulgarian Academy of Sciences; Akad. G. Bonchev St, Bl. 8, 1113 Sofia, Bulgaria; \\email: m.shkolnikov@math.bas.bg\vspace{5pt}\\ 
Peter Petrov, Institute of Mathematics and Informatics, Bulgarian Academy of Sciences;
Akad. G. Bonchev St, Bl. 8, 1113 Sofia, Bulgaria;
\\email: pk5rov@gmail.com

\end{document}